\DeclarePairedDelimiter\floor{\lfloor}{\rfloor}
\newtheorem{mytheorem}{Theorem}
\newcommand{\Mod}[1]{\ (\text{mod}\ #1)}
\theoremstyle{definition}
\begin{document}

\title{Combinatorial Formula for The Partition Function}
\author{Zhumagali Shomanov}
\date{\today}
\maketitle

\begin{abstract}

In this article we will derive a combinatorial formula for the partition function p(n). In the second part of the paper we will establish connection between partitions and q-binomial coefficients and give new interpretation for q-binomial coefficients. 

\vspace{3mm}

{\bf Mathematics Subject Classification (2010).} Primary 05A17 Secondary 11P81.

\vspace{3mm}

{\bf Keywords.} integer partitions, partition function, graphs, q-binomial coefficients.

\end{abstract}

\section{Introduction}

In this paper I will derive a combinatorial formula for the partition function. I discovered this formula by investigating a tree of the partition function. It appears that the set of all partitions is not homogeneous and different partitions belong to different levels.
 
The formula that will be discussed in this paper has also been discovered by Andrew Sills\cite{Sills} from Georgia Southern University. In his paper Professor Sills uses Durfee Squares to prove the formula. In my work, as I have mentioned above, I derive the formula from the tree structure of the partition function. 

I learned about work of Professor Sills accidentally, after my colleague Francesco Sica had written a letter to George Andrews for the quick assessment of the formula. Professor Andrews, in turn, forwarded this letter to Andrew Sills, who sent me his own article and suggested me to think about open questions which he posted in his article. I believe that my approach is different from his and uses another combinatorial methods.

In the second part of the paper I establish connection between partitions and q-binomial coefficients and give new combinatorial meaning for the q-binomial coefficients.

\section{Formulas for the partition function}

Partition function $p(n)$ is the number of ways of writing integer $n$ as a sum of positive integers, where the order of addends is not important. For example, since

\begin{displaymath}
4=1+1+1+1=1+1+2=1+3=2+2
\end{displaymath}
$p(4)=5$. By convention $p(n)=1$ when $n=0$ and $p(n)=0$ when $n<0$.

It is very hard to compute $p(n)$ for big $n$ by simply writing down all partitions of $n$. The table below shows values of $p(n)$ for different $n$'s.

\vspace{10mm}

\begin{tabular}{cccc}
\toprule

  n    	& p(n)            	& n      	& p(n) 						   	\\ 
\midrule
 10   	& 42               	& 200  	& 3972999029388 				   		\\
 50   	& 204226       	& 500   	& 2300165032574323995027 		   		\\
100 	& 190569292 	& 1000 	& 24061467864032622473692149727991 		\\
\bottomrule
\end{tabular}

\vspace{10mm}

Therefore, one may ask whether there are some formulas for the partition function?

One of the first formulas was obtained by L. Euler:

\begin{multline*}
p(n)=p(n-1)+p(n-2)-p(n-5)-p(n-7)+\ldots+\\
+(-1)^{k-1}p\left(n-\cfrac{k(3k-1)}{2}\right)+(-1)^{k-1}p\left(n-\cfrac{k(3k+1)}{2}\right)+\ldots.	
\end{multline*}
This recurrence relation was used by Major MacMahon to compute values of $p(n)$ for $n=0,\ldots,200$. However, its drawback is that to compute $p(n)$ for some $n$ one needs to know previous values of $p(n)$.

One of the crowning achievements in the theory of partitions is the exact formula for $p(n)$ undertaken and mostly completed by G. H. Hardy and S. Ramanujan\cite{Ramanujan} and fully completed and perfected by H. Rademacher\cite{Rademacher}:

\begin{displaymath}
p(n)=\cfrac1{\pi\sqrt{2}}\sum_{k=1}^\infty \sqrt{k}A_k(n)\cfrac{\mathrm{d}}{\mathrm{d}n}\cfrac{\sinh\left(\cfrac{\pi\lambda_n}{k}\sqrt{\cfrac23}\right)}{\lambda_n},
\end{displaymath}	
where $\lambda_n=\sqrt{n-\cfrac1{24}}$, $\displaystyle A_k(n)=\sum_{\substack{h(\mathrm{mod}\; k) \\ (h,k)=1}}\omega_{h,k}\exp^{-2\pi ihn/k}$ and $\omega_{h,k}$ is equal to

\[ 
\begin{cases}
	\left(\cfrac{-k}{h}\right)\exp\left(-\pi i\left(\cfrac14(2-hk-h)+\cfrac1{12}(k-k^{-1})(2h-h'+h^2h')\right)\right) & h\;\text{odd} \\
	\left(\cfrac{-h}{k}\right)\exp\left(-\pi i\left(\cfrac14(k-1)+\cfrac1{12}(k-k^{-1})(2h-h'+h^2h')\right)\right) & k\;\text{odd}
\end{cases}
\]
Here $h'$ is a solution to the congruence $hh'\equiv-1\,(\mathrm{mod}\,k)$.

In 2011 Ken Ono and Jan Hendrik Bruinier\cite{OnoBruinier} obtained an algebraic formula for the partition function in the following way. First, they define weight -2 weakly holomorphic modular form

\begin{displaymath}
F(z)=\cfrac12\cdot\cfrac{E_2(z)-2E_2(2z)-3E_2(3z)+6E_2(6z)}{\eta^2(z)\eta^2(2z)\eta^2(3z)\eta^2(6z)}
\end{displaymath}

Here $E_2(z)=1-24\sum_{n=1}^\infty\sigma(n)q^n$, $\eta(z)=q^{1/24}\prod_{n=1}^\infty(1-q^n)$ and $q=e^{2\pi iz}$. Then they define

\begin{displaymath}
P(z)=-\left(\cfrac{1}{2\pi i}\,\cfrac{d}{dz}+\cfrac1{2\pi y}\right)F(z)
\end{displaymath}

In the next step they consider the set of primitive integer binary quadratic forms $Q(x,y)=ax^2+bxy+cy^2$ with discriminant $1-24n$ and such that $6\,|\, a$ and $b\equiv1\Mod{12}$. The set of equivalence classes of such forms under the action of $\Gamma_0(6)$ is denoted by $\mathcal{Q}_n$. Then

\begin{displaymath}
p(n)=\cfrac1{24n-1}\sum_{Q\in\mathcal{Q}_n} P(\alpha_Q)
\end{displaymath}

where $\alpha_Q$ is the root of $Q(x,1)=0$ that lies in upper half of the complex plane.

Above we have seen analytic and algebraic formulas for the partition function. The next section is devoted to derivation of a combinatorial formula.

\section{Combinatorial formula for $p(n)$}

Notation:

\begin{align*}
&S_1(n)=n \\
&S_2(n)=\sum_{i=1}^{\floor*{\frac{n-2}{2}}} i(n-3-2(i-1)) \\
&S_{3j}(n)=j\sum_{i=1}^{\floor*{\frac{n-7-3(j-1)}{2}}} i(n-8-2(i-1)-3(j-1)) \\
&S_{4kj}(n)=kj\sum_{i=1}^{\floor*{\frac{n-14-3(j-1)-4(k-1)}{2}}} i(n-15-2(i-1)-3(j-1)-4(k-1)) \\ 
&\ldots \\
&S_{\floor*{\sqrt{n}}zyx\ldots kj}=zyx\ldots kj\sum_{i=1}^{\floor*{\frac{n-(\floor*{\sqrt{n}}^2-2)-3(j-1)-\ldots-\floor*{\sqrt{n}}(z-1)}{2}}} i(n-(\floor*{\sqrt{n}}^2-1)- \\
&-2(i-1)-3(j-1)-4(k-1)-\ldots-\floor*{\sqrt{n}}(z-1)) \\ 
\end{align*}

Then:

\begin{align*}
&S_3(n)=\sum_{j=1}^{\floor*{\frac{n-6}{3}}} S_{3j}(n) \\
&S_4(n)=\sum_{k=1}^{\floor*{\frac{n-12}{4}}}\sum_{j=1}^{\floor*{\frac{n-13-4(k-1)}{3}}} S_{4kj}(n) \\
&\ldots \\
&S_{\floor*{\sqrt{n}}}=\sum_{z=1}^{\floor*{\frac{n-({\floor*{\sqrt{n}}}^2-{\floor*{\sqrt{n}}})}{{\floor*{\sqrt{n}}}}}}\sum_{y=1}^{\floor*{\frac{n-({\floor*{\sqrt{n}}}^2-({\floor*{\sqrt{n}}}-1))-\floor*{\sqrt{n}}(z-1)}{\floor*{\sqrt{n}}-1}}}\ldots\sum_{j=1}^{\floor*{\frac{n-({\floor*{\sqrt{n}}}^2-3)-\floor*{\sqrt{n}}(z-1)-\ldots-4(k-1)}{3}}} S_{\floor*{\sqrt{n}}zyx\ldots kj}(n)
\end{align*}

\begin{mytheorem}
\begin{displaymath}
p(n)=S_1(n)+S_2(n)+S_3(n)+\ldots+S_{\floor*{\sqrt{n}}}(n)
\end{displaymath}
\end{mytheorem}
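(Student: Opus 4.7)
The plan is to identify $S_d(n)$ with $p_d(n)$, the number of partitions of $n$ whose Durfee square has side exactly $d$; the theorem then follows from $p(n)=\sum_{d=1}^{\lfloor\sqrt{n}\rfloor} p_d(n)$, which is immediate from the standard Durfee-square classification of partitions (a $d\times d$ square fits in the Young diagram of a partition of $n$ only when $d^2\le n$). To prove $S_d(n)=p_d(n)$ I would match generating functions, relying on the classical identity
\[
\sum_{n\ge 0} p_d(n)\, q^n \;=\; \frac{q^{d^2}}{\prod_{r=1}^{d}(1-q^r)^2},
\]
obtained by splitting such a partition into its $d\times d$ block, a partition with at most $d$ parts to the right (equivalent by conjugation to a partition with parts $\le d$), and a partition below with parts $\le d$. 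This is the identity Sills uses, but rather than decompose $S_d$ combinatorially I would match the two generating functions directly.

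The main calculation is then to compute $\sum_n S_d(n)\,q^n$. Expanding the summand, the constants telescope so that the innermost linear factor becomes $m:=n-d(d-1)/2-(2i+3j+\cdots+dz)$. I claim that the composite of the various floor-function bounds on the outer indices $z,\ldots,j$ together with the bound on $i$ is equivalent to the single clean set of constraints $i,j,\ldots,z,m\ge 1$. Granting this, $S_d(n)$ rewrites as
\[
S_d(n) \;=\; \sum_{\substack{i,j,\ldots,z,m\,\ge\,1\\ m+2i+3j+\cdots+dz\,=\,n-d(d-1)/2}} m\cdot i\cdot j\cdots z,
\]
whose generating function factors as
\[
q^{d(d-1)/2}\Bigl(\sum_{m\ge 1} m\, q^m\Bigr)\prod_{r=2}^{d}\Bigl(\sum_{p\ge 1} p\, q^{rp}\Bigr) \;=\; q^{d(d-1)/2}\prod_{r=1}^{d}\frac{q^r}{(1-q^r)^2} \;=\; \frac{q^{d^2}}{\prod_{r=1}^{d}(1-q^r)^2},
\]
using $d(d-1)/2+(1+2+\cdots+d)=d^2$.

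The main obstacle is the bookkeeping step just granted: verifying that the intricate nested floor-function bounds in the definition of $S_{dzyx\cdots kj}$ encode exactly the inequalities $i,j,\ldots,z\ge 1$ and $m\ge 1$. I would handle this by induction on $d$, peeling off one sum at a time; at each stage the bound on the freshly outermost index should reduce, modulo the previously peeled constraints and the substitution for $m$, to that index being $\ge 1$. The arithmetic is routine (it amounts to repeatedly checking that $d\cdot(\text{index}) + \sum (\text{smaller indices})\cdot(\text{their weights}) \le n - d(d-1)/2 - 1$ matches the stated floor), but this is essentially the entire content of the proof beyond the classical Durfee identity.
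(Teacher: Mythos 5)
Your proof is correct, but it is not the paper's proof --- in fact it is essentially the route the introduction announces it is \emph{avoiding}. The paper's argument grows a ``tree of the partition function'' in which every node has one more output than inputs, and identifies the branches sprouting from $22$, $333$, $4444,\ldots$ with $S_2$, $S_{31}$, $S_{41},\ldots$; the identification is carried out by pictures for small $n$ and then asserted to persist (``branches of the next level repeat the structure of the previous one''), so it is an informal structural description rather than a closed deduction. You instead identify $S_d(n)$ with the number of partitions of $n$ whose Durfee square has side exactly $d$ and match generating functions --- essentially the Durfee-square proof of Choliy--Sills that the paper cites as the \emph{other} approach. Your one deferred step does go through: setting $m=n-d(d-1)/2-(2i+3j+\cdots+dz)$, each floor bound in the definition is precisely the condition that the next inner sum be nonempty (e.g.\ in $S_{3j}$, $m\ge1$ is equivalent to $i\le\lfloor(n-7-3(j-1))/2\rfloor$, and that range being nonempty is equivalent to $j\le\lfloor(n-6)/3\rfloor$, and so on outward), so
\begin{displaymath}
S_d(n)=\sum_{\substack{m,i,j,\ldots,z\ge1\\ m+2i+3j+\cdots+dz=n-d(d-1)/2}} m\,i\,j\cdots z,
\end{displaymath}
whose generating function is $q^{d(d-1)/2}\prod_{r=1}^{d}q^{r}/(1-q^{r})^{2}=q^{d^{2}}/\prod_{r=1}^{d}(1-q^{r})^{2}$, the Durfee generating function. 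What your route buys is a complete, checkable proof in a few lines of algebra; what it gives up is the paper's stated goal of deriving the formula from the tree structure independently of Durfee squares, and it does not recover the tree interpretation (which levels of the tree correspond to which $S_{d\cdots}$) that the paper's pictures are meant to convey.
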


\begin{proof}

We will try to construct the tree of the partition function, from which the formula will follow immediately. The tree will follow the following logic: in each node the number of outputs is one more than the number of inputs. Below is the picture of the tree for $n=0,\ldots,3$.

\begin{center}
\includegraphics[scale=.35]{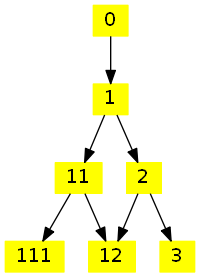}
\end{center}

Partitions in yellow form $S_1(n)$. Since node $12$ has two inputs, there has to be three outputs. Two of them are $112$ and $13$, and the third one is completely new, $22$. Partitions growing out of 22 form the first summand of $S_2(n)$.

\begin{center}
\includegraphics[scale=.35]{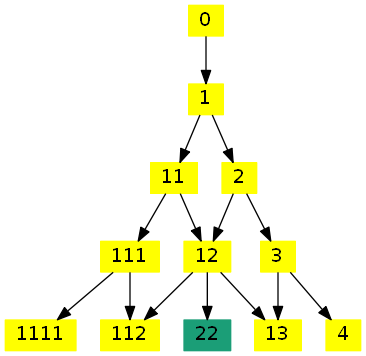}
\end{center}

Next picture shows partition tree for $n=0,\ldots,5$

\begin{center}
\includegraphics[scale=.35]{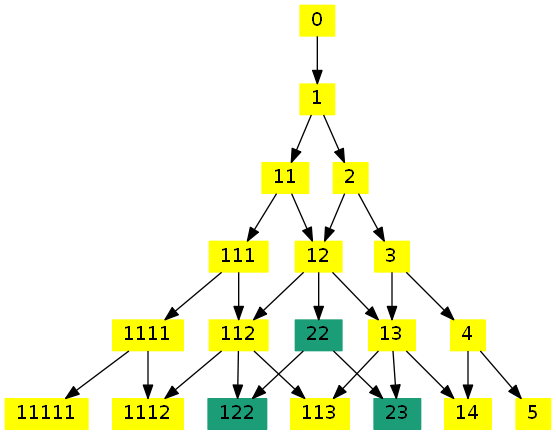}
\end{center}

Each of partitions 122 and 23 have two inputs, and therefore they should have three outputs. Outputs of 122 are 1122, 222, 123, and the outputs of 23 are 123, 33, 24. Partitions growing out of 222 and 33 together form the second summand of $S_2(n)$.

\begin{center}
\includegraphics[scale=.35]{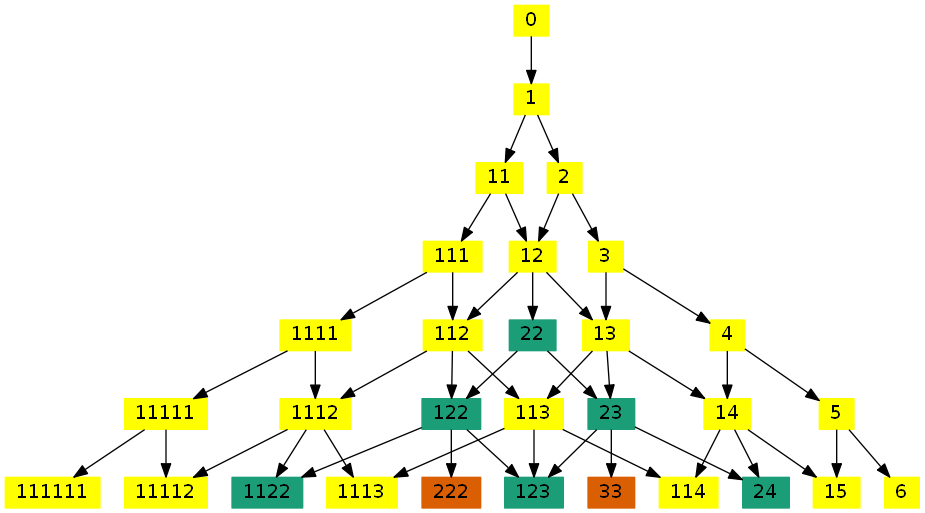}
\end{center}

The next two pictures show evolution of the tree for $n=7$ and $n=8$ (Yellow partitions are omitted in the interest of readability).

\begin{center}
\includegraphics[scale=.35]{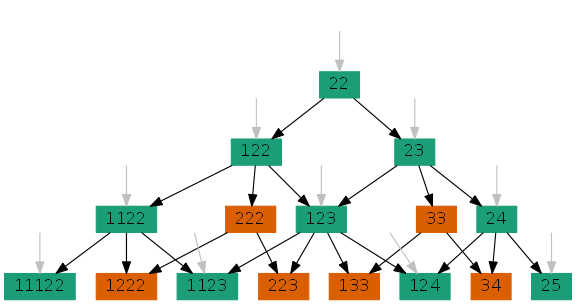}
\end{center}

\begin{center}
\includegraphics[scale=.44]{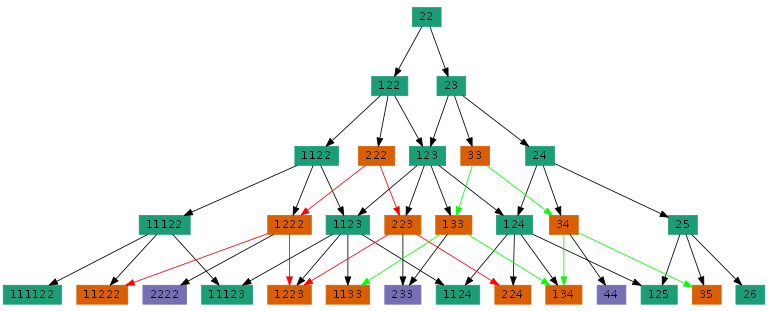}
\end{center}

Partitions growing out of 2222, 233 and 44 (violet partitions in the picture above) correspond to the term $3(n-7)$ in the formula for $S_2(n)$. Moreover, partition 233 gives rise to one completely new partition --- 333. This partition does not belong to $S_2(n)$ and starts the branch belonging to $S_{31}(n)$.

\begin{center}
\includegraphics[scale=.5]{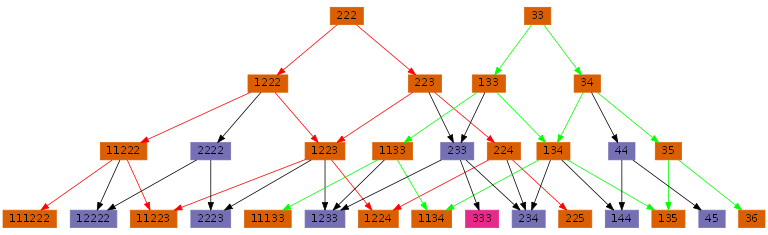}
\end{center}

\begin{center}
\includegraphics[scale=.5]{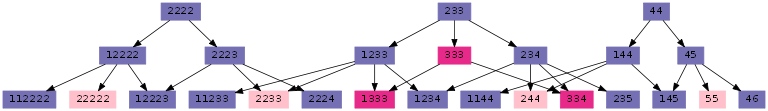}
\end{center}

Let us consider the branch starting from 333 separately:

\begin{center}
\includegraphics[scale=.4]{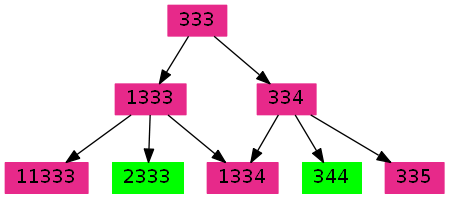}
\end{center}

\begin{center}
\includegraphics[scale=.4]{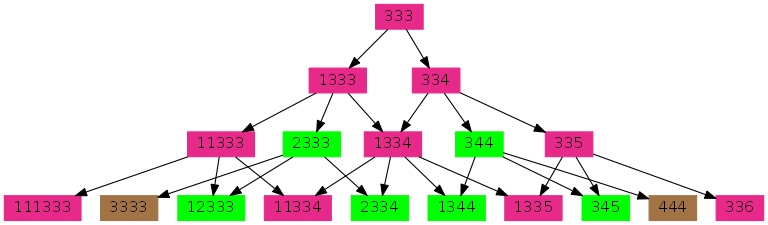}
\end{center}

One may notice the similarity in the structures of partitions growing out of 22 and partitions growing out of 333. However, partitions 3333 and 444 break this similarity. Therefore, it would be reasonable to consider them separately. It appears that each of these two branches behaves exactly as the branch starting from 333. They account for $S_{32}(n)$. 

\includegraphics[scale=.43]{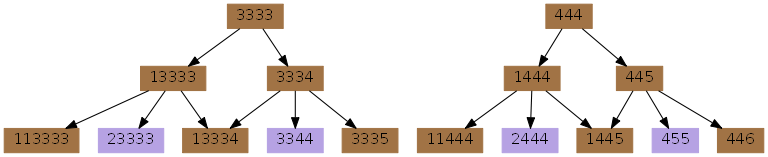}

When $n=15$ there will be three new branches that form $S_{33}(n)$, when $n=18$ there will be four branches that form $S_{34}(n)$ and so on. Therefore, each branch of $S_3(n)$ repeats the structure of $S_2(n)$. When $n=16$ we get one partition that does not fit into $S_1(n)$, $S_2(n)$ or $S_3(n)$ --- 4444. This partition starts a branch corresponding to $S_{41}(n)=\sum_{j=1}^{\floor*{(n-13)/3}} S_{41j}(n)$. This branch repeats the structure of a branch corresponding to $S_3(n)$. When $n=20$ we get two new partitions 44444 and 5555 that start branches that are counted by $S_{42}(n)=\sum_{j=1}^{\floor*{(n-17)/3}} S_{42j}$. Each of them repeats the structure of a branch corresponding to $S_3(n)$. On $n=24$ we get three such branches (444444, 45555, 6666), on $n=28$ --- four (4444444, 445555, 46666, 7777) and so on. In general, branches of the next level repeat the structure of the previous one. This is why as $k$ increases the formula becomes more and more involved.

\end{proof}

For instance, let us compute $p(21)$. Since $\floor*{\sqrt{21}}=4$, $p(21)=21+S_2(21)+S_3(21)+S_4(21)$.

\begin{align*}
&S_2(21)=\sum i(21-3-2(i-1))=\sum i(18-2(i-1))=18+2\cdot16+3\cdot14+\\
&+\ldots+8\cdot4+9\cdot2=330 \\
&S_{31}(21)=\sum i(21-8-2(i-1))=\sum i(13-2(i-1))=13+2\cdot11+3\cdot9+\\
&+\ldots+6\cdot3+7=140 \\
&S_{32}(21)=2\sum i(21-8-2(i-1)-3)=2\sum i(10-2(i-1))=2(10+2\cdot8+\\
&+3\cdot6+4\cdot4+5\cdot2)=140\\
&S_{33}(21)=3\sum i(21-8-2(i-1)-6)=3\sum i(7-2(i-1))=3(7+2\cdot5+\\
&+3\cdot3+4)=90\\
&S_{34}(21)=4\sum i(21-8-2(i-1)-9)=4\sum i(4-2(i-1))=4(4+2\cdot2)=32\\
&S_{35}(21)=5\sum i(21-8-2(i-1)-12)=5\sum i(1-2(i-1))=5\\
&S_{411}(21)=\sum i(21-15-2(i-1))=\sum i(6-2(i-1))=6+2\cdot4+3\cdot2=20\\
&S_{412}(21)=2\sum i(21-15-2(i-1)-3)=2\sum i(3-2(i-1))=2(3+2)=10\\
&S_{421}(21)=2\sum i(21-15-2(i-1)-4)=2\sum i(2-2(i-1))=2\cdot2=4
\end{align*}

Therefore, 
\begin{align*}
&S_3(21)=S_{31}+S_{32}+S_{33}+S_{34}+S_{35}=140+140+90+32+5=407 \\
&S_4(21)=S_{411}+S_{412}+S_{421}=20+10+4=34
\end{align*}
and
\begin{displaymath}
p(21)=21+330+407+34=792
\end{displaymath}

The table below shows the first few values of $S_k(n)$.

\vspace{10mm}

\begin{tabular}{ccccccccccccccccc}

\toprule
  n    		& 1      & 2      & 3 & 4 & 5 & 6 & 7 & 8 & 9 & 10 & 11&12&13&14&15&16					   	\\ 
\midrule
 $S_1(n)$   	& 1      & 2  	& 3 & 4 & 5 & 6 & 7 & 8 & 9 & 10	& 11&12&13&14&15&16 		   		\\
 $S_2(n)$   	& 	 &    	&    & 1 & 2 & 5 & 8 & 14 & 20 & 30 & 40&55&70&91&112&140		   		\\
$S_{31}(n)$ 	& 	 &  	&    &    &    &    &    &      & 1  & 2 & 5&8&14&20&30&40		\\
$S_{32}(n)$ & & & & & & & & & & & & 2 & 4 & 10 & 16 & 28 \\
$S_{33}(n)$ & & & & & & & & & & & & & & &3&6 \\
$S_{411}(n)$ & & & & & & & & & & & & & & & &1\\
\bottomrule
\end{tabular}

\vspace{10mm}

\section{Closed form expressions for $S_k(n)$}

If you try to sum $S_2(n)$ for all $n$, you will get a formula that involves floor function. However, if we consider only odd or even $n$ then we get:

\begin{align*}
S_2(n|n\;\text{odd})=2\begin{pmatrix} \cfrac{n-5}{2} \\[10pt] 0 \end{pmatrix}+6\begin{pmatrix} \cfrac{n-5}{2} \\[10pt] 1 \end{pmatrix}+6\begin{pmatrix} \cfrac{n-5}{2} \\[10pt] 2 \end{pmatrix}+2\begin{pmatrix} \cfrac{n-5}{2} \\[10pt] 3 \end{pmatrix} \\
S_2(n|n\;\text{even})=\begin{pmatrix} \cfrac{n-4}{2} \\[10pt] 0 \end{pmatrix}+4\begin{pmatrix} \cfrac{n-4}{2} \\[10pt] 1 \end{pmatrix}+5\begin{pmatrix} \cfrac{n-4}{2} \\[10pt] 2 \end{pmatrix}+2\begin{pmatrix} \cfrac{n-4}{2} \\[10pt] 3 \end{pmatrix}
\end{align*} 

In case of $S_3(n)$ we have to consider six different cases:

\begin{align*}
&S_3(n\,|\,n\equiv0\Mod{3}, n\;\text{odd})=\begin{pmatrix} \cfrac{n-9}{6} \\[10pt] 0 \end{pmatrix}+48\begin{pmatrix} \cfrac{n-9}{6} \\[10pt] 1 \end{pmatrix}+310\begin{pmatrix} \cfrac{n-9}{6} \\[10pt] 2 \end{pmatrix}+\\
&+695\begin{pmatrix} \cfrac{n-9}{6} \\[10pt] 3 \end{pmatrix}+648\begin{pmatrix} \cfrac{n-9}{6} \\[10pt] 4 \end{pmatrix}+216\begin{pmatrix} \cfrac{n-9}{6} \\[10pt] 5 \end{pmatrix}\\
&S_3(n\,|\,n\equiv1\Mod{3}, n\;\text{even})=2\begin{pmatrix} \cfrac{n-10}{6} \\[10pt] 0 \end{pmatrix}+72\begin{pmatrix} \cfrac{n-10}{6} \\[10pt] 1 \end{pmatrix}+390\begin{pmatrix} \cfrac{n-10}{6} \\[10pt] 2 \end{pmatrix}+\\
&+788\begin{pmatrix} \cfrac{n-10}{6} \\[10pt] 3 \end{pmatrix}+684\begin{pmatrix} \cfrac{n-10}{6} \\[10pt] 4 \end{pmatrix}+216\begin{pmatrix} \cfrac{n-10}{6} \\[10pt] 5 \end{pmatrix}\\
&S_3(n\,|\,n\equiv2\Mod{3}, n\;\text{odd})=5\begin{pmatrix} \cfrac{n-11}{6} \\[10pt] 0 \end{pmatrix}+105\begin{pmatrix} \cfrac{n-11}{6} \\[10pt] 1 \end{pmatrix}+483\begin{pmatrix} \cfrac{n-11}{6} \\[10pt] 2 \end{pmatrix}+\\
&+887\begin{pmatrix} \cfrac{n-11}{6} \\[10pt] 3 \end{pmatrix}+720\begin{pmatrix} \cfrac{n-11}{6} \\[10pt] 4 \end{pmatrix}+216\begin{pmatrix} \cfrac{n-11}{6} \\[10pt] 5 \end{pmatrix}\\
&S_3(n\,|\,n\equiv0\Mod{3}, n\;\text{even})=10\begin{pmatrix} \cfrac{n-12}{6} \\[10pt] 0 \end{pmatrix}+148\begin{pmatrix} \cfrac{n-12}{6} \\[10pt] 1 \end{pmatrix}+590\begin{pmatrix} \cfrac{n-12}{6} \\[10pt] 2 \end{pmatrix}+\\
&+992\begin{pmatrix} \cfrac{n-12}{6} \\[10pt] 3 \end{pmatrix}+756\begin{pmatrix} \cfrac{n-12}{6} \\[10pt] 4 \end{pmatrix}+216\begin{pmatrix} \cfrac{n-12}{6} \\[10pt] 5 \end{pmatrix}\\
&S_3(n\,|\,n\equiv1\Mod{3}, n\;\text{odd})=18\begin{pmatrix} \cfrac{n-13}{6} \\[10pt] 0 \end{pmatrix}+203\begin{pmatrix} \cfrac{n-13}{6} \\[10pt] 1 \end{pmatrix}+712\begin{pmatrix} \cfrac{n-13}{6} \\[10pt] 2 \end{pmatrix}+\\
&+1103\begin{pmatrix} \cfrac{n-13}{6} \\[10pt] 3 \end{pmatrix}+792\begin{pmatrix} \cfrac{n-13}{6} \\[10pt] 4 \end{pmatrix}+216\begin{pmatrix} \cfrac{n-13}{6} \\[10pt] 5 \end{pmatrix}\\
\end{align*}

\begin{align*}
&S_3(n\,|\,n\equiv2\Mod{3}, n\;\text{even})=30\begin{pmatrix} \cfrac{n-14}{6} \\[10pt] 0 \end{pmatrix}+272\begin{pmatrix} \cfrac{n-14}{6} \\[10pt] 1 \end{pmatrix}+850\begin{pmatrix} \cfrac{n-14}{6} \\[10pt] 2 \end{pmatrix}+\\
&+1220\begin{pmatrix} \cfrac{n-14}{6} \\[10pt] 3 \end{pmatrix}+828\begin{pmatrix} \cfrac{n-14}{6} \\[10pt] 4 \end{pmatrix}+216\begin{pmatrix} \cfrac{n-14}{6} \\[10pt] 5 \end{pmatrix}
\end{align*}

In the case of $S_4(n)$ there are already 12 formulas and in the case of $S_5(n)$ there are 60 formulas.

\section{Q-binomial coefficients}

We define q-binomial coefficient $\begin{bmatrix} n \\ k \end{bmatrix}$ as:

\begin{displaymath}
\begin{bmatrix} n \\ k \end{bmatrix}=\begin{cases} \cfrac{(1-q^n)(1-q^{n-1})\cdots(1-q^{n-k+1})}{(1-q)(1-q^2)\cdots(1-q^k)} & \mbox{if } k\le n \\ 0 & \mbox{if } k>n \end{cases}.
\end{displaymath}

At first glance it may appear that $\begin{bmatrix} n \\ k \end{bmatrix}$ is a rational function, but it can be proven that it is always a polynomial. Let

\begin{displaymath}
[m]_q=\cfrac{1-q^m}{1-q}
\end{displaymath}

Then

\begin{displaymath}
\begin{bmatrix} n \\ k \end{bmatrix}=\cfrac{[n]_q[n-1]_q\cdots[n-k+1]_q}{[1]_q[2]_q\cdots[k]_q}
\end{displaymath}

or if we denote $[m]_q!=[1]_q[2]_q\cdots[m]_q$

\begin{displaymath}
\begin{bmatrix} n \\ k \end{bmatrix}=\cfrac{[n]_q!}{[k]_q![n-k]_q!}
\end{displaymath}

When $q$ approaches one $\begin{bmatrix} n \\ k \end{bmatrix}$ transforms into usual binomial coefficient. Therefore, it would be normal to expect that some facts about binomial coefficients have their analogs in case of q-binomial coefficients. And indeed it is so. For example, there are analogs of Pascal identities:

\begin{displaymath}
\begin{bmatrix} n \\ k \end{bmatrix}=q^r\begin{bmatrix} n-1 \\ k \end{bmatrix}+\begin{bmatrix} n-1 \\ k-1 \end{bmatrix}
\end{displaymath} 

and

\begin{displaymath}
\begin{bmatrix} n \\ k \end{bmatrix}=\begin{bmatrix} n-1 \\ k \end{bmatrix}+q^{n-k}\begin{bmatrix} n-1 \\ k-1 \end{bmatrix}
\end{displaymath}

and q-analog of the binomial theorem

\begin{displaymath}
\prod_{k=0}^{n-1}(1+q^kt)=\sum_{k=0}^nq^{k(k-1)/2}\begin{bmatrix} n \\ k \end{bmatrix}t^k
\end{displaymath}

By doing some manipulations and taking limit as $n$ approaches to infinity one can derive the infinite $q$-binomial theorem:

\begin{displaymath}
\sum_{k=0}^\infty\cfrac{(1-a)(1-qa)\cdots(1-q^{k-1}a)}{(1-q)(1-q^2)\cdots(1-q^k)}t^k=\prod_{k=1}^\infty\cfrac{(1-q^kat)}{(1-q^kt)}
\end{displaymath}

This theorem is very important in the theory of basic hypergeometric series. It is also possible to derive from the finite $q$-binomial theorem the Jacobi triple product identity:

\begin{displaymath}
\prod_{j=1}^\infty(1+q^{2j-1}z)(1+q^{2j-1}z^{-1})(1-q^{2j})=\sum_{j=-\infty}^\infty q^{j^2}z^j
\end{displaymath}

Now, consider the following grids:

$$
\begin{array}{cccccccccccccccccc}
\vspace{5mm}
 1 \\
 1 \\
\vspace{5mm}
 1 \\
 1 \\
 1 & \textcolor{red}{1} \\
\vspace{5mm}
 1 \\
 \textcolor{red}{1} \\
 \textcolor{red}{1} & \textcolor{green}{1} & \textcolor{orange}{1} \\
 \textcolor{red}{1} & \textcolor{green}{1} & \textcolor{orange}{1} \\
\vspace{5mm}
 \textcolor{red}{1} \\
 \textcolor{green}{1} \\
 \textcolor{green}{1} & \textcolor{orange}{1} & \textcolor{blue}{1} & 1 \\
 \textcolor{green}{1} & \textcolor{orange}{1} & \textcolor{blue}{2} & 1 & 1 \\
 \textcolor{green}{1} & \textcolor{orange}{1} & \textcolor{blue}{1} & 1 \\
\vspace{5mm}
 \textcolor{green}{1} \\
 \textcolor{orange}{1} \\
 \textcolor{orange}{1} & \textcolor{blue}{1} & 1 & 1 & 1 \\
 \textcolor{orange}{1} & \textcolor{blue}{1} & 2 & 2 & 2 & 1 & 1 \\
 \textcolor{orange}{1} & \textcolor{blue}{1} & 2 & 2 & 2 & 1 & 1 \\
 \textcolor{orange}{1} & \textcolor{blue}{1} & 1 & 1 & 1 \\
\vspace{5mm}
 \textcolor{orange}{1} \\
 \textcolor{blue}{1} \\
 \textcolor{blue}{1} & 1 & 1 & 1 & 1 & 1 \\
 \textcolor{blue}{1} & 1 & 2 & 2 & 3 & 2 & 2 & 1 & 1 \\
 \textcolor{blue}{1} & 1 & 2 & 3 & 3 & 3 & 3 & 2 & 1 & 1 \\
 \textcolor{blue}{1} & 1 & 2 & 2 & 3 & 2 & 2 & 1 & 1 \\
 \textcolor{blue}{1} & 1 & 1 & 1 & 1 & 1 \\
\vspace{5mm}
 \textcolor{blue}{1} \\
 1 \\
 1 & 1 & 1 & 1 & 1 & 1 & 1 \\
 1 & 1 & 2 & 2 & 3 & 3 & 3 & 2 & 2 & 1 & 1 \\
 1 & 1 & 2 & 3 & 4 & 4 & 5 & 4 & 4 & 3 & 2 & 1 & 1 \\
 1 & 1 & 2 & 3 & 4 & 4 & 5 & 4 & 4 & 3 & 2 & 1 & 1 \\
 1 & 1 & 2 & 2 & 3 & 3 & 3 & 2 & 2 & 1 & 1 \\
 1 & 1 & 1 & 1 & 1 & 1 & 1 \\
\vspace{5mm}
 1 \\

\end{array}
$$

Each line in each grid represents some q-binomial coefficient. For example, the lines in the fifth gird correspond to the coefficients of the following Gaussian polynomials:
\begin{align*}
&\begin{bmatrix} 4 \\ 0 \end{bmatrix}=1 \\
&\begin{bmatrix} 4 \\ 1 \end{bmatrix}=1+q+q^2+q^3 \\
&\begin{bmatrix} 4 \\ 2 \end{bmatrix}=1+q+2q^2+q^3+q^4 \\
&\begin{bmatrix} 4 \\ 3 \end{bmatrix}=1+q+q^2+q^3 \\
&\begin{bmatrix} 4 \\ 4 \end{bmatrix}=1
\end{align*}

Now, it turns out that the sum of the numbers in red gives number of partitions of 4. Numbers in green add up to $p(5)$, numbers in orange to $p(6)$, numbers in blue to $p(7)$ and so on. Therefore, to each coefficient in each Gaussian polynomial there should correspond some partitions.

It is well known that the coefficient of $q^r$ in $\begin{bmatrix} n+m \\ n \end{bmatrix}$ represents the number of partitions of $r$ with $m$ or fewer parts each less than or equal to $n$. However, in our case this interpretation does not make any sense. But when we consider coefficient of $q^r$ in $\begin{bmatrix} n+m \\ n \end{bmatrix}$ to be equal to the number of partitions of $m+n+1+r$ with $n+1$ parts greatest part being equal to $m+1$ then everything falls in its place. In mathematical notation the previous sentence can be written as 

\begin{displaymath}
\begin{bmatrix} n+m \\ n \end{bmatrix}=\sum_{r\ge0}p(n+m+1+r\,|\,n+1\,\text{parts},\text{greatest part}=m+1)q^r
\end{displaymath}

%For example, consider polynomial $\begin{bmatrix} 4 \\ 2 \end{bmatrix}=\begin{bmatrix} 2+2 \\ 2 \end{bmatrix}=1+q+2q^2+q^3+q^4$. The coefficient of $q^0$ counts the number of partitions of 5 into 3 parts with greatest part being equal to 3. Obviously, there is only one such partition, namely $3+1+1$. The coefficient of $q^2$ counts the number of partitions of 7 into 3 parts with greatest part being equal to 3. They are: $1+3+3=2+2+3$.

For example, consider the following Gaussian polynomials

\begin{align*}
&\begin{bmatrix} 6 \\ 0 \end{bmatrix}=1 \\
&\begin{bmatrix} 6 \\ 0 \end{bmatrix}=1+q+q^2+q^3+q^4+q^5 \\
&\begin{bmatrix} 6 \\ 0 \end{bmatrix}=1+q+2q^2+2q^3+3q^4+2q^5+2q^6+q^7+q^8 \\
&\begin{bmatrix} 6 \\ 0 \end{bmatrix}=1+q+2q^2+3q^3+3q^4+3q^5+3q^6+2q^7+q^8+q^9 \\
&\begin{bmatrix} 6 \\ 0 \end{bmatrix}=1+q+2q^2+2q^3+3q^4+2q^5+2q^6+q^7+q^8 \\
&\begin{bmatrix} 6 \\ 0 \end{bmatrix}=1+q+q^2+q^3+q^4+q^5 \\
&\begin{bmatrix} 6 \\ 0 \end{bmatrix}=1 
\end{align*}

The table below shows partitions corresponding to these polynomials.

\begin{center}
\begin{tabular}{cp{1cm}p{0.8cm}p{0.8cm}p{0.8cm}p{0.8cm}p{0.8cm}p{0.8cm}p{0.8cm}p{0.8cm}p{0.8cm}}
\toprule

$\begin{bmatrix} n \\ m \end{bmatrix}$     	& $q^0$            	& $q^1$      	& $q^2$		& $q^3$		& $q^4$					&	$q^5$		& 	$q^6$		&	$q^7$ 	&	$q^8$	&	$q^9$	\\
\midrule
$\begin{bmatrix} 6 \\ 0 \end{bmatrix}$    	& 1111111         &	&	& 	& 	&	&	&	&	&\\[10pt]
$\begin{bmatrix} 6 \\ 1 \end{bmatrix}$    	& 111112           & 111122  	& 111222 	& 112222	& 122222 	&	222222	&	&	&	&\\[10pt]
$\begin{bmatrix} 6 \\ 2 \end{bmatrix}$    	& 11113             & 11123  	& 11133 11223 	& 11233 12223	& 11333  12233 22223	&	12333 22233	&	13333 22333	&	23333		&	33333		&\\[10pt]
$\begin{bmatrix} 6 \\ 3 \end{bmatrix}$    	& 1114               & 1124  	& 1134 1224 	& 1144 1234 2224	& 1244  1334 2234	&	1344 2244 2334	&	1444 2344 3334	&	2444 3344	&	3444		&	4444	\\[10pt]
$\begin{bmatrix} 6 \\ 4 \end{bmatrix}$   	& 115       	     & 125   	& 135 225		& 145 235	   	& 155 245 335			&	255 345		&	355 445	&	455	&	555	&		\\[10pt]
$\begin{bmatrix} 6 \\ 5 \end{bmatrix}$    	& 16           & 26  	& 36 	& 46	& 56 	&	66	&	&	&	&\\[10pt]
$\begin{bmatrix} 6 \\ 0 \end{bmatrix}$    	& 7         &	&	& 	& 	&	&	&	&	& \\
\bottomrule
\end{tabular}
\end{center}
\vspace{10mm} 

Looking at the table one may notice that center simmetric partitions are conjugate to each other. This fact can be generalized as follows: partitions corresponding to $q^r$ in $\begin{bmatrix} n \\ k \end{bmatrix}$ are conjugate to the partitions corresponding to $q^r$ in $\begin{bmatrix} n \\ n-k \end{bmatrix}$.

Remember that

\begin{displaymath}
\begin{bmatrix} n+m \\ n \end{bmatrix}=\sum_{r\ge0}p(r\,|\,\le m\,\text{parts},\text{each}\le n)q^r.
\end{displaymath}

At the same time

\begin{displaymath}
\begin{bmatrix} n+m \\ n \end{bmatrix}=\sum_{r\ge0}p(n+m+1+r\,|\,n+1\,\text{parts},\text{greatest part}=m+1)q^r.
\end{displaymath}

Putting these two facts together we get

\begin{displaymath}
p(n+m+1+r\,|\,n+1\,\text{parts},\text{greatest part}=m+1)=p(r\,|\,\le m\,\text{parts},\text{each}\le n)
\end{displaymath}

for $r=0,1,2,\ldots\,$.

It is well known that $\begin{bmatrix} n \\ m \end{bmatrix}=\begin{bmatrix} n \\ n-m \end{bmatrix}$. On the other hand, 

\begin{displaymath}
\begin{bmatrix} n \\ m \end{bmatrix}=\sum_{r\ge0}p(n+1+r\,|\,\text{into}\,n-m+1\,\text{parts, greatest}=m+1)q^r
\end{displaymath}

and

\begin{displaymath}
\begin{bmatrix} n \\ n-m \end{bmatrix}=\sum_{r\ge0}p(n+1+r\,|\,\text{into}\,m+1\,\text{parts, greatest}=n-m+1)q^r.
\end{displaymath}

Therefore, 

\begin{multline*}
p(n+1+r\,|\,\text{into}\,n-m+1\,\text{parts, greatest}=m+1)= \\
p(n+1+r\,|\,\text{into}\,m+1\,\text{parts, greatest}=n-m+1)
\end{multline*}

%Moreover, it appears that partitions corresponding to $q^r$ in $\begin{bmatrix} n \\ m \end{bmatrix}$ are conjugate to partitions corresponding to $q^r$ in $\begin{bmatrix} n \\ n-m \end{bmatrix}$. Consider the case when $n=6$. This case corresponds to the seventh grid in the picture above. Take $m=2$. Then

%\begin{displaymath}
%\begin{bmatrix} 6 \\ 2 \end{bmatrix}=1+q+2q^2+2q^3+3q^4+2q^5+2^6+q^7+q^8=\begin{bmatrix} 6 \\ 4 \end{bmatrix}
%\end{displaymath}

%The table below shows partitions corresponding to $\begin{bmatrix} 6 \\ 2 \end{bmatrix}$ and $\begin{bmatrix} 6 \\ 4 \end{bmatrix}$

%\vspace{10mm}
%\begin{center}
%\begin{tabular}{cp{0.8cm}p{0.8cm}p{0.8cm}p{0.8cm}p{0.8cm}p{0.8cm}p{0.8cm}p{0.8cm}p{0.8cm}}
%\toprule

%$\begin{bmatrix} n \\ m \end{bmatrix}$     	& $q^0$            	& $q^1$      	& $q^2$		& $q^3$		& $q^4$					&	$q^5$		& 	$q^6$		&	$q^7$ 	&	$q^8$	\\
%\midrule
%$\begin{bmatrix} 6 \\ 2 \end{bmatrix}$    	& 11113             	& 11123  	& 11133 11223 	& 11233 12223	& 11333  12233 22223	&	12333 22233	&	13333 22333	&	23333		&	33333		\\[10pt]
%$\begin{bmatrix} 6 \\ 4 \end{bmatrix}$   	& 115       		& 125   	& 135 225		& 145 235	   	& 155 245 335			&	255 345		&	355 445		&	455		&	555		\\
%\bottomrule
%\end{tabular}
%\end{center}
%\vspace{10mm} 

\section{Conclusion}

Partition function plays very important role in many areas of mathematics. For example:

\begin{enumerate}

\item 	$p(n)$ counts the number of conjugacy classes in the symmetric group $S_n$, which is at the same time the number of irreducible representations of $S_n$.

\item 	Number of distinct Abelian groups of order $p^n$ equals to $p(n)$.

\end{enumerate}

\end{document}